\def\C {\mathbb C}
\newcommand{\abs}[1]{| #1 |}
\newcommand{\Abs}[1]{\left| #1\right|}
\newcommand{\B}{\mathbb B}
\def\bcases{\begin{cases}}
\newcommand{\bk}[1]{\left[#1\right]}
\newcommand{\br}[1]{\left(#1\right)}
\def\ecases{\end{cases}}
\newcommand{\cl} {\overline}
\newcommand{\dd}{\delta}
\newcommand{\D}{\mathbb D}
\def\dist{\operatorname{dist}}
\newcommand{\e}{\epsilon}
\newcommand{\Frac}{\displaystyle\frac}
\newcommand{\norm} [1]{\left\| #1\right\|}
\newcommand{\of}{\circ}
\newcommand{\p}{\partial}
\newcommand{\re}{\text{\rm Re}\,}
\newcommand{\set}[1]{\left\{ #1\right\}}
\def\sm{\setminus}
\newcommand{\To}{\longrightarrow}
\newcommand{\W}{\Omega}
\newcommand{\z}{\zeta}
\newtheorem{thm}{Theorem}
\newtheorem{cor}{Corollary}
\newtheorem{lemma}{Lemma}
\newtheorem{prop}{Proposition}
\theoremstyle{definition}
\theoremstyle{remark}
\newtheorem{remark}{Remark}
\newcommand{\be}{\begin{equation}}
\newcommand{\ee}{\end{equation}}
\title{Kobayashi, Carath\'eodory,
 and Sibony metric}
      \author{John Erik Forn\ae ss, Lina Lee}
     \thanks{The first author is supported by NSF grant} 
\begin{document}
\maketitle

\begin{abstract}
In this paper, we estimate the boundary behaviour of the Sibony metric near a pseudoconcave boundary point. We show that the metric blows up at a different rate than the Kobayashi metric. 
 \end{abstract}
 
\section{Introduction}
Invariant metrics play an important role in Complex Analysis. Yet many of their basic properties are still unknown. In this paper the authors investigate the boundary behaviour of the Kobayashi, Carath\'eodory, and Sibony metrics near pseudoconcave boundary points. We show that their growth rates are different. So we work near a boundary point where at least one of the eigenvalues of the Levi form is strictly negative. We let $F_K, F_S, F_C$ denote the Kobayahsi, Sibony and Carath\'eodory metrics respectively.
Our main result is the following:

\begin{thm}\label{333}
Let $\W$ be a bounded domain in $\C^n, n>1,$ with $C^2$-boundary.
Let $P$ be a boundary point which is not pseudoconvex. Let $P_\delta$ be the point on the inner normal to $p$ at distance $\delta$ and let $\nu$ be a unit complex normal vector to $\partial \W$ at $P.$
Then $F_K^\W(P_\dd,\nu) \approx  \frac{1}{\dd^{3/4}}, F_S^\W(P_\dd,\nu) \approx  \frac{1}{\dd^{1/2}},
 F_C^\W(P_\dd,\nu) \approx  1.$
\end{thm}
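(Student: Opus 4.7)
After a local biholomorphic change of coordinates at $P$, write $\rho(z) = 2\re z_1 - |z_2|^2 + o(|z|^2)$ with $\nu = e_1$ and $P_\dd = (-\dd, 0, \dots, 0)$; here $-|z_2|^2$ captures the negative Levi eigenvalue. Each of the three metrics is treated separately with matching upper and lower bounds.

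\textbf{Carath\'eodory.} The lower bound $F_C \geq c$ follows from the test function $f(z) = \langle z - P, \bar\nu\rangle/(2\,\mathrm{diam}\,\W) \in \mathrm{Hol}(\W, \D)$. For the upper bound, the Hartogs continuity principle applies at the pseudoconcave $P$: every bounded holomorphic function on $\W$ extends holomorphically to a fixed open neighborhood $U$ of $P$ (via analytic discs transverse to $\partial\W$ sliding inward), so Cauchy estimates on $U$ bound $|\partial_\nu f(P_\dd)|$ uniformly in $\dd$.

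\textbf{Kobayashi.} The upper bound uses the test disc
\[f(\z) = \bigl(-\dd + \z/\alpha,\ c\z^2,\ 0,\dots,0\bigr), \qquad \alpha \asymp \dd^{-3/4},\]
with $c$ small. The pseudoconcave cushion $-c^2|\z|^4$ in $\rho\circ f$ forces $f(\D)\subset\W$ precisely at the $\dd^{-3/4}$ scale: the maximum of $2r/\alpha - c^2 r^4$ on $r\in[0,1]$ equals $(3/2)(2c^2\alpha)^{-1/3}/\alpha$, which balances $2\dd$ exactly when $\alpha \asymp \dd^{-3/4}$. For the lower bound, any competing disc $g:\D\to\W$ with $g(0) = P_\dd$ and $g'(0) = \alpha^{-1}\nu$ must respect a plurisubharmonic barrier $\psi$ on $\W$; the subharmonicity of $\psi\circ g$ on $\D$ combined with a Jensen-type inequality at $\z = 0$ forces $\alpha \geq c\dd^{-3/4}$.

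\textbf{Sibony.} For every admissible $u$ ($0\leq u\leq 1$, $C^2$ near $P_\dd$, $\log u \in \mathrm{PSH}(\W)$, $u(P_\dd)=0$), $\log u$ is plurisubharmonic, $\leq 0$, with Lelong number $\geq 2$ at $P_\dd$, so $\log u \leq 2 G^\W_{P_\dd}$ where $G^\W_{P_\dd}$ is the pluricomplex Green function of $\W$ with pole at $P_\dd$. A direct computation of the Hessian at the pole yields
\[F_S(P_\dd, \nu)^2 = \bigl(\partial\bar\partial e^{2G^\W_{P_\dd}}\bigr)(P_\dd)(\nu, \bar\nu) = e^{2\,\mathrm{Rob}(P_\dd)},\]
where $\mathrm{Rob}(P_\dd) = \lim_{z\to P_\dd}[G^\W_{P_\dd}(z) - \log|z - P_\dd|]$. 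The claim $F_S \approx \dd^{-1/2}$ thus reduces to $\mathrm{Rob}(P_\dd) = -\tfrac12\log\dd + O(1)$. The lower bound on $\mathrm{Rob}$ comes from an explicit plurisubharmonic candidate: locally near $P_\dd$, a regularization of $\tfrac12\log\bigl(c(|z_1+\dd|^2 + \dd|z_2|^2)\bigr)$ has the desired Robin behavior, glued globally via a plurisubharmonic cutoff using the pseudoconcavity at $P$. The upper bound on $\mathrm{Rob}$---the main technical obstacle---requires a delicate argument exploiting the anisotropic pseudoconcave geometry (normal extent $\dd$, tangent extent $\sim 1$ in the pseudoconcave direction): the pluricomplex Green function averages these two scales logarithmically to give the $-\tfrac12\log\dd$ rate, in contrast to the $-\log\dd$ of the strongly pseudoconvex case where only the normal extent contributes. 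The naive disc-comparison using the Kobayashi-optimal $f$ above only yields $\mathrm{Rob} \leq -\tfrac34\log\dd$ (reproducing $F_S \leq F_K$); sharpening to $\tfrac12$ is where the pseudoconcavity enters essentially.
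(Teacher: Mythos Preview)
Your treatment of the Carath\'eodory metric is fine and matches the paper's (Hartogs extension for the upper bound, a trivial test function for the lower bound). The Kobayashi upper bound via the explicit disc $(-\dd+\z/\alpha, c\z^2,0,\dots,0)$ is essentially Krantz's argument, which the paper simply cites; your lower bound (``a plurisubharmonic barrier $\psi$ \dots Jensen-type inequality'') is too vague to count as a proof, but the paper does not reprove this either---it reduces via localization and monotonicity to Krantz's annulus result.

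The Sibony part, however, has a genuine gap. First, the identity you write,
\[
F_S(P_\dd,\nu)^2=\bigl(\partial\bar\partial e^{2G^\W_{P_\dd}}\bigr)(P_\dd)(\nu,\bar\nu),
\]
is not justified: the inequality $\log u\le 2G$ together with $u(P_\dd)=e^{2G}(P_\dd)=0$ does give a Hessian comparison and hence $F_S\le$ (Azukawa metric), but for the reverse inequality you would need $e^{2G}$ to lie in $A_\W(P_\dd)$, in particular to be $C^2$ near $P_\dd$. That regularity is not available in general, and the Sibony and Azukawa metrics are known to differ on some domains. Second, and more decisively, you explicitly leave the upper bound on the Robin constant unproved, calling it ``the main technical obstacle'' and offering only the heuristic that the Green function ``averages'' the normal and tangential scales. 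Since the Sibony estimate is the paper's main new contribution, this is exactly the step that cannot be waved through.

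The paper's argument is completely different and avoids the Green function altogether. For the \emph{upper} bound $F_S\lesssim\dd^{-1/2}$ it uses the sublinearity of the Sibony metric (your Lemma~2): write $\nu=(1,0)=(\tfrac12,v)+(\tfrac12,-v)$, choose $|v|$ just large enough that the complex lines through $P_\dd$ in the directions $(\tfrac12,\pm v)$ miss the inner boundary, and observe that along such lines the Sibony metric collapses to the ball's Carath\'eodory metric. Optimising over $v$ gives $|v|\approx\dd^{-1/2}$ (for $m=2$) and hence the bound. For the \emph{lower} bound it exhibits an explicit $u\in A_\W(P_\dd)$ built from $\dd\,\bigl|\frac{z-p}{z-p+2\dd}\bigr|^2$ glued to $\log(L|w|^{2+\e})$. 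Both halves are elementary and constructive; neither uses pluripotential theory. The general $C^2$-boundary case (Theorem~1) is then obtained from this annulus model by sandwiching $\W\cap U$ between an exterior-tangent ball complement and an interior pseudoconcave slice, plus a localization lemma for $F_S$ proved in Section~4.
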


The main new result in this theorem is the estimate for the Sibony metric, which is the following: 
\begin{thm}\label{332}
Let $\W=\set{1/4<\abs z^2+\abs w^m<1}\subset\C^2$, $m\ge 2$ and $P_\dd=(1/2+\dd,0)$. Then
$$
F_S^\W(P_\dd,\nu)\approx\frac{1}{\dd^{1-\frac{1}{m}}},\quad\nu=(1,0),
$$
for $\dd>0$ small enough.
\end{thm}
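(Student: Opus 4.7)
The goal is to prove matching bounds $F_S^\W(P_\dd,\nu)\asymp\dd^{-(m-1)/m}$.

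\emph{Lower bound.} I would construct an explicit admissible test function. Any sum $u=\sum_j|f_j|^2$ of moduli squared of holomorphic functions $f_j\colon\W\to\C$ is $\log$-plurisubharmonic; taking all $f_j$ to vanish at $P_\dd$ gives $u(P_\dd)=0$, and $u_{z\bar z}(P_\dd)=\sum_j|\p_z f_j(P_\dd)|^2$. Writing $f_j=(z-z_0)\phi_j$ with $z_0=1/2+\dd$, this reduces to finding holomorphic $\phi_j$ on $\W$ with $|z-z_0|^2\sum|\phi_j|^2\le 1$ and $\sum|\phi_j(P_\dd)|^2\asymp\dd^{-2(m-1)/m}$. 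The key is that the slice $\{z=z_0\}\cap\W=\{(z_0,w):|w|^m<1-z_0^2\}$ is a disc in $w$ of radius $\sim 1$, on which $|z-z_0|^2$ vanishes, so $\sum|\phi_j|^2$ is free to peak at $P_\dd$. I would tune the $\phi_j$'s using monomials $z^aw^b$ at the $\dd^{1/m}$-scale of the pseudoconcave inner boundary to achieve the rate.

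\emph{Upper bound.} Averaging $u$ over the rotations $w\mapsto e^{i\theta}w$ preserves log-psh-ness, the bound $u\le 1$, and the value $u_{z\bar z}(P_\dd)$, so we may assume $u$ is $w$-rotationally invariant: $u(z,w)=\tilde u(z,|w|^2)$. Near $P_\dd$, $\tilde u(z,s)\approx A|z-z_0|^2+Bs$ with $A=u_{z\bar z}(P_\dd)$, $B=u_{w\bar w}(P_\dd)$. The inner-boundary geometry forces $|w'|^m\gtrsim R-\dd$ at any point $(z_0-R,w')\in\W$ with $R>\dd$; at the critical balance $R\sim\dd^{(m-1)/m}$, $|w'|\sim\dd^{(m-1)/m^2}$, the bound $u\le 1$ combined with the plurisubharmonicity of $\log u$ (which couples $A$ and $B$ in a scale-invariant manner near $P_\dd$) forces $A\lesssim\dd^{-2(m-1)/m}$.

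\emph{Main obstacle.} The upper bound is the delicate part. The standard comparison $F_S\le F_K$ would only give Theorem \ref{333}'s Kobayashi rate $\dd^{-3/4}$, which is strictly weaker than $\dd^{-(m-1)/m}$ for $m<4$ (and Theorem \ref{333}'s hypothesis of strict pseudoconcavity fails at $(1/2,0)$ for $m>2$ anyway). A genuinely two-dimensional plurisubharmonic argument exploiting the $|w|^m$-geometry of the inner boundary, rather than reducing to a single analytic disc through $P_\dd$, is therefore needed.
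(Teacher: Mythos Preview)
Your lower-bound strategy has a fatal gap rooted in Hartogs' phenomenon. Any holomorphic function on $\W$ extends to the filled-in domain $\hat\W=\{|z|^2+|w|^m<1\}$; hence each of your $f_j$ extends, and so does the plurisubharmonic function $u=\sum_j|f_j|^2$. Since $u\le 1$ on $\W$ and the inner hole is relatively compact with boundary contained in $\cl\W$, the maximum principle forces $u\le 1$ on all of $\hat\W$. Thus your test function actually lies in $A_{\hat\W}(P_\dd)$, whence
\[
\p\cl\p u(P_\dd)(\nu,\cl\nu)\le\bigl(F_S^{\hat\W}(P_\dd,\nu)\bigr)^2\approx 1,
\]
hopelessly far from $\dd^{-2(1-1/m)}$. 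This is exactly the mechanism behind $F_C^\W\approx 1$, and no tuning of monomials $z^aw^b$ can evade it. The paper's lower bound instead builds a test function that \emph{cannot} extend across the hole: on the region $\{|w|^m\lesssim\dd\}$ it uses
\[
f(z)=\dd^{2/m}\Abs{\frac{z-p}{z-p+2\dd}}^2,
\]
whose pole at $z=1/2-\dd$ sits inside the deleted region and produces the Hessian $\asymp\dd^{-2+2/m}$ at $P_\dd$; for $|w|^m\gtrsim\dd$ (where that pole line re-enters $\W$) one patches via a $\max$ with $\log(L|w|^{2+\e})$. The patching is precisely what blocks the extension.

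For the upper bound the paper takes a route quite different from your averaging sketch. It exploits subadditivity,
\[
F_S^\W\bigl(P_\dd,(1,0)\bigr)\le F_S^\W\bigl(P_\dd,(\tfrac12,v)\bigr)+F_S^\W\bigl(P_\dd,(\tfrac12,-v)\bigr),
\]
and observes that once $|v|$ exceeds a threshold $\beta$, the complex line through $P_\dd$ in direction $(1,\pm v)$ misses the inner boundary entirely, so along those directions $F_S^\W=F_C^\B\approx|v|$. A one-variable calculus estimate then gives $\beta\lesssim\dd^{-(1-1/m)}$, and the bound follows. Your rotational-averaging reduction is legitimate, but the step ``log-plurisubharmonicity couples $A$ and $B$ and forces $A\lesssim\dd^{-2(1-1/m)}$'' is the entire content of the upper bound and is left as an assertion; the subadditivity argument bypasses that analysis completely.
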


The estimate for the Kobayahsi metric, Theorem \ref{645}, is due to Krantz, see \cite{krantz-annulus}: $F_K^\W(P_\dd,\nu)\approx\frac{1}{\dd^{1-1/(2m)}}$.

In \cite{fornaess}, it is shown that the Kobayashi metric and the Sibony metric are different on a ring domain. Theorem \ref{332} shows that the Sibony metric has actually a smaller blowing up rate than the Kobayashi metric does.  

The plan of the paper is as follows. In the second section, we give 
background information. In the third section, we prove Theorem \ref{332}. Theorem \ref{333} is proved in Section 4.

We would like to thank Prof. Steven G. Krantz for suggesting this problem. 
%%%%%%%%%%%%%%%%%%%%%%%%%%%%%%%%%%%%%%%%%%
\section{Background}
In this section, we give definitions and properties of the metrics, which are used in later sections. For more detailed discussion of the metrics, refer \cite{Kobayashi}, \cite{Krantz}, \cite{Royden}, and \cite{sibony}.

Let $\W\subset\C^n$ be a domain, $P\in\W$, and $\xi=(\xi_1,\dots,\xi_n)\in\C^n$. The Kobayashi metric $F_K^\W(P,\xi)$, the Carath\'eodory metric $F_C^\W(P,\xi)$, and the Sibony metric $F_S^\W(P,\xi)$ are defined as follows:
\begin{align}
F_K^\W(P,\xi)&=\inf\set{\alpha:\exists\phi\in\W(\D), \text{ s.t. }\phi(0)=P, \,\phi'(0)=\xi/\alpha, \, \alpha>0}\\
F_C^\W(P,\xi)&=\sup\set{\abs{f_*(P)\xi}=\Abs{\sum_{i=1}^n\frac{\p f(P)}{\p z_i}\xi_i}: f\in\D(\W),\, f(P)=0}\\
\label{648}
F_S^\W(P,\xi)&=\sup\set{\br{\p\cl\p u(P)(\xi,\cl\xi)}^{1/2}=\br{\sum_{i,j=1}^n\frac{\p^2 u(P)}{\p z_i\p\cl z_j}\xi_i\cl\xi_j}^{1/2}: u\in A_\W(P)},
\end{align}
where $\D$ denotes a unit disc in $\C$, $\W_2(\W_1)$ the family of holomorphic mappings from $\W_1$ to $\W_2$ and $A(P,\W)$ is the set of plurisubharmonic functions on $\W$ such that $u\in A(P,\W)$ if $u(P)=0$, $u$ is $C^2$ near $P$, $\log u$ is plurisubharmonic on $\W$, and $0\le u\le 1$ on $\W$. 

The three metrics satisfy the non-increasing property under holomorphic mappings. 

\begin{lemma}
Let $\W_1$ and $\W_2$ be domains in $\C^n$ and $\C^m$ respectively, $\Phi:\W_1\To\W_2$ be a holomorphic mapping, $P\in\W_1$, and $\xi\in T_P(\W_1)$. Then we have
\be\label{623}
F^{\W_1}(P,\xi)\ge F^{\W_2}(\Phi(P),\Phi_*(P)\xi)
\ee
\end{lemma}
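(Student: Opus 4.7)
The plan is to prove each of the three inequalities separately by constructing, from a competitor in the defining family on the target side, a corresponding competitor on the source side (or the reverse, for the Kobayashi metric). In every case the chain rule will match up the first- or second-order data.

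First I would handle the Kobayashi metric. Given any $\phi\in\W_1(\D)$ with $\phi(0)=P$ and $\phi'(0)=\xi/\alpha$, I consider $\psi:=\Phi\circ\phi\in\W_2(\D)$. Then $\psi(0)=\Phi(P)$ and, by the chain rule, $\psi'(0)=\Phi_*(P)\phi'(0)=\Phi_*(P)\xi/\alpha$. Hence the same $\alpha$ is admissible on the right-hand side, so the infimum defining $F_K^{\W_2}(\Phi(P),\Phi_*(P)\xi)$ can only be smaller. This gives \eqref{623} for the Kobayashi metric.

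Next I would treat the Carath\'eodory metric. Given any $f\in\D(\W_2)$ with $f(\Phi(P))=0$, the pullback $g:=f\circ\Phi$ lies in $\D(\W_1)$ and satisfies $g(P)=0$. The chain rule gives $g_*(P)\xi=f_*(\Phi(P))\Phi_*(P)\xi$, so $|g_*(P)\xi|=|f_*(\Phi(P))\Phi_*(P)\xi|$. Taking the supremum over all such $f$ on the right produces a value attainable by a $g$ on the left, so $F_C^{\W_1}(P,\xi)\ge F_C^{\W_2}(\Phi(P),\Phi_*(P)\xi)$.

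For the Sibony metric, given $u\in A(\Phi(P),\W_2)$, I would set $v:=u\circ\Phi$ and check it belongs to $A(P,\W_1)$: it satisfies $v(P)=0$, is $C^2$ near $P$, and $0\le v\le 1$ on $\W_1$ since $\Phi$ takes $\W_1$ into $\W_2$; moreover, pulling a plurisubharmonic function back by a holomorphic map preserves plurisubharmonicity, applied to both $u$ and $\log u$, so $v$ and $\log v=(\log u)\circ\Phi$ are plurisubharmonic on $\W_1$. The key computation is then the holomorphic chain rule for the complex Hessian,
\[
\p\cl\p v(P)(\xi,\cl\xi)=\p\cl\p u(\Phi(P))\bigl(\Phi_*(P)\xi,\overline{\Phi_*(P)\xi}\bigr),
\]
which follows because the $\cl\p\Phi$ terms vanish. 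Taking square roots and then suprema gives the inequality.

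None of the three steps is really an obstacle; the only point that requires a moment of care is verifying that the pullback $u\circ\Phi$ actually lies in the admissibility class $A(P,\W_1)$ for the Sibony metric, in particular the plurisubharmonicity of $\log(u\circ\Phi)$, which however is immediate from the general fact that plurisubharmonicity is preserved under holomorphic pullback.
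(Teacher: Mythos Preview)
Your argument is correct and is the standard one: for each metric you push forward or pull back the competitor via $\Phi$ and use the chain rule to match the relevant first- or second-order data; the only delicate point, the admissibility of $u\circ\Phi$ in $A(P,\W_1)$, is handled correctly since plurisubharmonicity is preserved under holomorphic pullback. The paper itself does not supply a proof of this lemma---it is quoted as a background fact (with references to \cite{Kobayashi}, \cite{Royden}, \cite{sibony})---so there is nothing to compare against, but your write-up would serve perfectly well as the omitted verification.
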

\begin{cor}\label{cor}
If $\W_1\subset\W_2$, then
\be\label{443}
F^{\W_1}(P,\xi)\ge F^{\W_2}(P,\xi).
\ee
\end{cor}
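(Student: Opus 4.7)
The plan is to deduce the corollary as an immediate application of the preceding lemma, taking the holomorphic map $\Phi$ to be the tautological inclusion. Concretely, since $\W_1\subset\W_2$, the identity-on-coordinates map $\Phi:\W_1\To\W_2$, $\Phi(z)=z$, is well-defined and holomorphic, and for every $P\in\W_1$ and $\xi\in T_P(\W_1)=\C^n$ we have $\Phi(P)=P$ and $\Phi_*(P)\xi=\xi$. Plugging these into the inequality \eqref{623} from the lemma yields $F^{\W_1}(P,\xi)\ge F^{\W_2}(P,\xi)$, which is exactly \eqref{443}.

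Since the lemma has already been established for $F_K$, $F_C$, and $F_S$ simultaneously, one statement handles all three cases at once and there is no need to revisit each definition. If one instead wished to prove the corollary directly, the pattern in each case is equally short: any competitor $\phi\in\W_1(\D)$ for $F_K^{\W_1}(P,\xi)$ is automatically a competitor for $F_K^{\W_2}(P,\xi)$ (so the infimum can only decrease on $\W_2$); any $f\in\D(\W_2)$ with $f(P)=0$ restricts to an element of $\D(\W_1)$ with the same differential at $P$ (so the supremum in the Carath\'eodory definition is over a larger family on $\W_1$); and any $u\in A(P,\W_2)$ restricts to an element of $A(P,\W_1)$ preserving the value of $\p\cl\p u(P)(\xi,\cl\xi)$ (so again the sup over $\W_1$ dominates).

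There is essentially no obstacle here. The only item worth flagging is that the tangent vector $\xi$ lives in the common ambient $\C^n$, so identifying $T_P(\W_1)$ with $T_P(\W_2)$ along the inclusion is canonical and the statement \eqref{443} is unambiguous.
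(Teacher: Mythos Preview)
Your argument is correct and matches the paper's approach: the corollary is stated immediately after the lemma and is meant to follow by taking $\Phi$ to be the inclusion $\W_1\hookrightarrow\W_2$, exactly as you do. The additional direct verifications you sketch for each metric are fine but unnecessary here.
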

The Kobayashi metric and the Carath\'eodory metric are two extremes of the pseudo-metrics in the sense that if $F$ is a pseudometric defined in such a way that it coincides with the Poincar\'e metric on the unit disc in $\C$ and satisfies (\ref{623}), then we have
$$
F_C^\W(P,\xi)\le F^\W(P,\xi)\le F_K^\W(P,\xi),
$$
on any domain $\W$. 
The Sibony metric coincides with the Poincar\'e metric on the unit disc and satisfies the non-increasing property under holomorphic mappings \cite{sibony}. Hence we have
\be\label{1054}
F_C^\W(P,\xi)\le F_S^\W(P,\xi)\le F_K^\W(P,\xi).
\ee
The Sibony metric is defined as the supremum of the Hessian of certain plurisubharmonic functions (\ref{648}). Hence we have the following property, which is stated in \cite{sibony} without a proof. We include the proof for the convenience of the reader.
\begin{lemma}\label{1054}
$$
F_S^\W(P,\xi_1+\xi_2)\le F_S^\W(P,\xi_1)+F_S^\W(P,\xi_2)
$$
\end{lemma}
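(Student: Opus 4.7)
The plan is to reduce the triangle inequality for $F_S^\W$ to the Cauchy--Schwarz / triangle inequality for a single positive semi-definite Hermitian form, applied uniformly in the admissible function $u$, and then take the supremum.

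Fix any $u \in A_\W(P)$. Since $u$ is plurisubharmonic and $C^2$ near $P$, the Levi form $H_u(P)(\xi,\bar\eta) := \sum_{i,j} \frac{\p^2 u(P)}{\p z_i \p \bar z_j} \xi_i \bar\eta_j$ is a positive semi-definite Hermitian form on $\C^n$. Consequently the map $\xi \mapsto \sqrt{H_u(P)(\xi,\bar\xi)}$ is a seminorm on $\C^n$, and in particular satisfies the triangle inequality
\[
\sqrt{H_u(P)(\xi_1+\xi_2,\overline{\xi_1+\xi_2})} \;\le\; \sqrt{H_u(P)(\xi_1,\bar\xi_1)} + \sqrt{H_u(P)(\xi_2,\bar\xi_2)}.
\]
This is the key pointwise inequality; it is a standard consequence of positivity (expand the square and use Cauchy--Schwarz on the mixed term $2\,\re H_u(P)(\xi_1,\bar\xi_2)$).

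Next I would take the supremum over $u \in A_\W(P)$. Each term on the right is bounded by the corresponding $F_S^\W(P,\xi_j)$, so for every admissible $u$,
\[
\sqrt{H_u(P)(\xi_1+\xi_2,\overline{\xi_1+\xi_2})} \;\le\; F_S^\W(P,\xi_1) + F_S^\W(P,\xi_2).
\]
Taking the sup of the left-hand side over $u$ yields $F_S^\W(P,\xi_1+\xi_2) \le F_S^\W(P,\xi_1) + F_S^\W(P,\xi_2)$, as desired.

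There is essentially no obstacle: the only content is that the Levi form of any $u \in A_\W(P)$ is positive semi-definite, which is immediate from plurisubharmonicity. The subtlety worth flagging in the write-up is that one cannot simply ``distribute'' the supremum inside the square root; one must first apply the triangle inequality for each fixed $u$ and only then pass to the supremum, which is exactly what the argument does.
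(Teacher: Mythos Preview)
Your proof is correct and is essentially the same argument as the paper's: for each fixed $u\in A_\W(P)$ one uses that the Levi form is positive semi-definite to get the triangle inequality (via Cauchy--Schwarz on the cross term), and then passes to the supremum over $u$. The paper merely writes out the expansion of $\partial\bar\partial u(\xi_1+\xi_2,\overline{\xi_1+\xi_2})$ explicitly rather than invoking the seminorm property, but the content is identical.
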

\begin{proof}
\begin{multline*}
\br{F_S^\W(P,\xi_1+\xi_2)}^2=\sup_{u\in A_\W(P)}\p\cl\p u(\xi_1+\xi_2,\cl\xi_1+\cl\xi_2)\\
=\sup_u\bk{\p\cl\p u(\xi_1,\cl\xi_1)+\p\cl\p u(\xi_2,\cl\xi_2)+2\re\p\cl\p u(\xi_1,\cl\xi_2)}\\
\le \sup_u\bk{\p\cl\p u(\xi_1,\cl\xi_1)+\p\cl\p u(\xi_2,\cl\xi_2)+2\br{\p\cl\p u(\xi_1,\cl\xi_1)}^{1/2}\br{\p\cl\p u(\xi_2,\cl\xi_2)}^{1/2}}\\
=\sup_u \bk{\br{\p\cl\p u(\xi_1,\cl\xi_1)}^{1/2}+\br{\p\cl\p u(\xi_2,\cl\xi_2)}^{1/2}}^2\\
\le \bk{\br{\sup_u \p\cl\p u(\xi_1,\cl\xi_1)}^{1/2}+\br{\sup_u \p\cl\p u(\xi_2,\cl\xi_2)}^{1/2}}^2
=\br{F_S^\W(P,\xi_1)+F_S^\W(P,\xi_2)}^2
\end{multline*}
\end{proof}

In \cite{krantz-annulus}, Krantz shows the asymptotic behavior of the Kobayashi metric near the inner boundary of an annulus in the normal direction. 

Let
$$
\W=\set{(z,w)\in\C^2:\frac{1}{4}<\abs z^2+\abs w^m<1, m \geq 2}
$$
and $P_\dd=(p,0)=(1/2+\dd,0)$ and $\nu=(1,0)$. 
\begin{thm}[Krantz \cite{krantz-annulus}]\label{645}
$$
F_K^\W(P_\dd,\nu)\approx\br{\frac{1}{\dd}}^{1-\frac{1}{2m}}.
$$
\end{thm}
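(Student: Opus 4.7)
The estimate is two-sided, so the plan is to produce matching upper and lower bounds on $F_K^\W(P_\dd,\nu)$ at the rate $\dd^{-(1-1/(2m))}$.

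For the upper bound I would exhibit an explicit analytic disc. Take $\phi(\zeta)=(\tfrac12+\dd+A\zeta,\,B\zeta^2)$, so that $\phi(0)=P_\dd$ and $\phi'(0)=A\nu$, hence $F_K^\W(P_\dd,\nu)\le 1/A$. Expanding,
\[
|z(\zeta)|^2+|w(\zeta)|^m-\tfrac14
=\dd+(1+2\dd)A\,\re\zeta+A^2|\zeta|^2+|B|^m|\zeta|^{2m}+O(\dd^2),
\]
the worst $\zeta$ for the inner-boundary inequality is $\zeta=-r$, reducing the question to the one-variable condition $\dd-Ar+A^2r^2+|B|^m r^{2m}\ge 0$ on $[0,1)$. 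Standard calculus locates the critical radius $r_\star\asymp(A/|B|^m)^{1/(2m-1)}$ and shows the inequality is tight iff $A\lesssim\dd^{(2m-1)/(2m)}|B|^{1/2}$. Choosing $B$ a positive constant small enough to respect the outer constraint $|z|^2+|w|^m<1$ on $\D$ yields $A\asymp\dd^{1-1/(2m)}$ and thus $F_K^\W(P_\dd,\nu)\lesssim\dd^{-(1-1/(2m))}$.

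For the lower bound I want to show that any $\phi=(z,w)\colon\D\to\W$ with $\phi(0)=P_\dd$ and $\phi'(0)=A\nu$ must satisfy $A\lesssim\dd^{1-1/(2m)}$. The starting inputs are: (a) the $w$-projection of $\W$ is the unit disc $\{|w|<1\}$, and since $w(0)=w'(0)=0$, Schwarz gives $|w(\zeta)|\le|\zeta|^2$, so $|w(\zeta)|^m\le|\zeta|^{2m}$; (b) writing $z(\zeta)=(\tfrac12+\dd)+A\zeta+E(\zeta)$, the inner-boundary inequality $|z|^2+|w|^m>1/4$ at $\zeta=-r$ becomes, after expansion, $Ar\le\dd+(\text{error in }r)+r^{2m}$. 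Optimizing in $r$ at the scale $r\asymp\dd^{1/(2m)}$ then produces the claimed bound and gives $F_K^\W(P_\dd,\nu)\gtrsim\dd^{-(1-1/(2m))}$.

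The hard part is controlling the error in (b). A plain Schwarz bound $|E(\zeta)|\le C|\zeta|^2$ with $C$ a universal constant (which one gets from $|z|<1$ alone) only yields $A\lesssim\sqrt{\dd}$, which is too weak by a factor $\dd^{(m-1)/(2m)}$. The sharp rate requires using the outer constraint $|z|^2+|w|^m<1$ to link the Taylor coefficients of $z$ and $w$. Equivalently, one passes to the anisotropic rescaling $(Z,W)=((z-\tfrac12)/\dd,\,w/\dd^{1/m})$, under which $\W$ becomes $\{0<\re Z+\dd|Z|^2+|W|^m<3/(4\dd)\}$ and $\nu$ rescales to $(1/\dd,0)$; the bound $|W|^m\lesssim 1/\dd$ forced by the outer boundary then accounts for precisely the $\dd^{1/(2m)}$ loss in the rescaled derivative that, on translating back, produces the stated $\dd^{1-1/(2m)}$ rate. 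The detailed bookkeeping is carried out in Krantz's original argument \cite{krantz-annulus}.
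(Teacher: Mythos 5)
The paper does not give its own proof of Theorem~\ref{645}; it is stated as a citation to Krantz \cite{krantz-annulus} and used as a black box. So there is no in-paper argument to compare against, and your proposal must be judged on its own.

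Your upper bound is correct and is the standard construction: the disc $\phi(\zeta)=(\tfrac12+\dd+A\zeta,\,B\zeta^2)$ with $B$ a fixed constant and $A\asymp\dd^{1-1/(2m)}$ does map $\D$ into $\W$ (the inner-boundary minimum occurs at $\zeta=-r$ with $r\asymp\dd^{1/(2m)}$, and one checks $\dd-Ar+A^2r^2+|B|^mr^{2m}\ge 0$ there), giving $F_K^\W(P_\dd,\nu)\lesssim\dd^{-(1-1/(2m))}$. Your lower bound, however, has a real gap, and you acknowledge it. The Taylor-expansion strategy you start with does not close: writing $z(\zeta)=(\tfrac12+\dd)+A\zeta+E(\zeta)$ and evaluating the inner-boundary constraint only at $\zeta=-r$ is inconclusive, because the error $E(-r)$ can just as well \emph{increase} $|z(-r)|$ as decrease it, in which case that single constraint imposes no bound on $A$ at all; a bound $|E|\le C|\zeta|^2$ neither yields $A\lesssim\sqrt{\dd}$ as you state nor anything better along these lines. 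The anisotropic rescaling you then invoke merely reorganizes the problem: it identifies the correct scales $r\asymp\dd^{1/(2m)}$, $|W|\asymp\dd^{-1/m}$, but one still has to prove that \emph{no} disc in the rescaled domain beats the model disc, which is the same lower-bound question, and you explicitly defer this to Krantz. A clean way to finish is the one you should aim for: from $w(0)=0$, $w'(0)=0$ and $|w|<1$, the Schwarz lemma gives $|w(\zeta)|\le|\zeta|^2$, hence $|z(\zeta)|^2>\tfrac14-|\zeta|^{2m}$ and so $|z(\zeta)|$ is bounded away from $0$ on small discs; now apply the Schwarz--Pick lemma to the holomorphic function $g=1/(2z)$ on $\D_r$, which satisfies $|g|<(1-4r^{2m})^{-1/2}$ there and has $|g(0)|\approx 1-2\dd$, $|g'(0)|\approx 2A$. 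This yields $A\lesssim(\dd+r^{2m})/r$ for every admissible $r$, and optimizing at $r\asymp\dd^{1/(2m)}$ gives $A\lesssim\dd^{1-1/(2m)}$, i.e. $F_K^\W(P_\dd,\nu)\gtrsim\dd^{-(1-1/(2m))}$. That step, not the rescaling, is the missing ingredient in your sketch.
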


Since the holomorphic convex hull of $\W$ is $\B=\set{\abs z^2+\abs w^2\le 1}\subset\C^2$, The Carath\'eodory metric on $\W$ coincides with the Carath\'eodory metric on $\B$, which we can explicitly calculate using the M\"obius tranform of $\B$:
\begin{prop}
$$
F_C^\W(P_\dd,\nu)\approx 1.
$$
\end{prop}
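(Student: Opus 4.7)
The plan is to reduce the Carath\'eodory metric on $\W$ to that on a pseudoconvex enlargement via Hartogs extension, and then bound it above by a one-variable Cauchy estimate and below by an explicit test function. First I would observe that $\W$ is a Hartogs-type shell: writing $\W=\W^*\sm K$ with $\W^*:=\set{\abs z^2+\abs w^m<1}$ and $K:=\set{\abs z^2+\abs w^m\le 1/4}$, the compact set $K$ sits inside the bounded pseudoconvex domain $\W^*$, since $\abs z^2+\abs w^m$ is plurisubharmonic for $m\ge 2$. Hartogs' Kugelsatz in $\C^2$ then extends every holomorphic $f\colon\W\to\D$ to a holomorphic function $\tilde f$ on $\W^*$, and the maximum principle applied on $\W^*$ (whose boundary lies in $\cl\W$) forces $\abs{\tilde f}\le 1$ there. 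Consequently, the families of admissible Carath\'eodory test functions at $P_\dd$ on $\W$ and on $\W^*$ coincide, giving $F_C^\W(P_\dd,\nu)=F_C^{\W^*}(P_\dd,\nu)$.

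For the upper bound I would apply a one-variable Cauchy estimate along the $z$-axis. Once $\dd<1/4$, the closed disc $\set{(z,0):\abs{z-(1/2+\dd)}\le 1/4}$ is contained in $\W^*$, since $\abs z\le 3/4+\dd<1$ on it, so Cauchy's inequality for $\tilde f$ on this disc yields $\abs{\tilde f_z(P_\dd)}\le 4$. For the lower bound, the explicit test function $f(z,w)=(z-(1/2+\dd))/2$ vanishes at $P_\dd$, satisfies $f_z(P_\dd)=1/2$, and sends $\W$ into $\D$ because $\abs z<1$ on $\W$. Combining the two estimates gives $F_C^\W(P_\dd,\nu)\approx 1$.

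The main technical step is the Hartogs extension together with the sup-norm preservation; everything else is routine. An equivalent route, matching the one indicated in the paper, avoids Cauchy estimates entirely: the inclusion $\B\subset\W^*$, valid because $\abs w\le 1$ and $m\ge 2$ imply $\abs w^m\le\abs w^2$, combines with Corollary \ref{cor} to give $F_C^{\W^*}(P_\dd,\nu)\le F_C^\B(P_\dd,\nu)$, and then $F_C^\B(P_\dd,\nu)\approx 1$ is read off directly from the M\"obius automorphisms of $\B$, since $P_\dd$ stays in a compact subset of $\B$ as $\dd\to 0$. Either way one concludes $F_C^\W(P_\dd,\nu)\approx 1$.
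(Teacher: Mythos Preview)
Your argument is correct. The core step---Hartogs extension across the compact hole to identify $F_C^\W$ with the Carath\'eodory metric on the pseudoconvex hull $\W^*$---is exactly what the paper uses (phrased there as ``the holomorphic convex hull of $\W$ is $\B$''). Where you diverge is in extracting the numerical bound: the paper computes $F_C^\B(P_\dd,\nu)=1/(1-p^2)$ exactly via the explicit M\"obius automorphism of the ball sending $P_\dd$ to $0$, whereas you obtain the upper bound by a one-variable Cauchy estimate on a disc of radius $1/4$ and the lower bound from the linear test function $(z-p)/2$. Your route is more elementary and does not require knowing $\mathrm{Aut}(\B)$, at the cost of cruder constants; the paper's route gives the sharp value but needs the ball's automorphism group. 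You already note this alternative at the end, and indeed the inclusion $\B\subset\W^*$ you cite is what bridges the two approaches. One small remark: your maximum-principle sentence is phrased as applying on $\W^*$, but since $\tilde f$ need not extend continuously to $\p\W^*$, the clean way is to apply the maximum principle on the compact set $K$ (whose boundary is the inner boundary of $\W$) to conclude $\abs{\tilde f}\le 1$ there, and then strict inequality follows since a nonconstant holomorphic function on the connected domain $\W^*$ cannot attain its supremum.
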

\begin{proof}
Let $\Phi$ be the M\"obius transform of $\B$ that maps $P_\dd$ to $0$:
$$
\Phi=\br{\frac{z-p}{1-pz},\frac{\sqrt{1-p^2}w}{1-pw}}
$$
Hence we get
\be\label{157}
F_C^\B(P_\dd,\nu)=F_K^\B(P_\dd,\nu)
=\abs{\Phi_*(P_\dd) \nu}
=\Abs{\frac{1}{1-p^2}}
\ee
\end{proof}

In section 4, we use the localization of the Kobayashi metric and the Sibony metric. We prove the localization of the Sibony metric in section 4 and here we present a proof of the localization of the Kobayashi metric. It first appeared in \cite{Royden} and later in \cite{Graham}.
\begin{lemma}\label{427}
Let $\W\subset\C^n$, $P\in\cl\W$, and $U$ be a neighborhood of $P$. If  $V\subset\subset U$ and $P\in V$, then we have
$$
F_K^\W(q,\xi)\approx F_K^{\W\cap U}(q,\xi)\quad\forall q\in \W\cap V, \,\xi\in\C^n.
$$
\end{lemma}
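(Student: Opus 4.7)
The plan is as follows. The upper bound $F_K^\W(q,\xi) \le F_K^{\W\cap U}(q,\xi)$ is immediate from the monotonicity under inclusion (Corollary \ref{cor}), so the task reduces to proving the reverse estimate $F_K^{\W\cap U}(q,\xi)\le C\, F_K^\W(q,\xi)$ with a constant $C$ depending only on $V$, $U$, and $\W$.

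Starting from an almost-extremal analytic disc $\phi\in\W(\D)$ with $\phi(0)=q$ and $\phi'(0)=\xi/\alpha$, where $\alpha$ is close to $F_K^\W(q,\xi)$, the idea is to truncate $\phi$ by pre-composing with a dilation. Concretely, I would look for a radius $\rho>0$, uniform in $q\in V$ and in $\phi$, with $\phi(\rho\D)\subset\W\cap U$; then $\psi(\z)=\phi(\rho\z)$ is an admissible disc in the definition of $F_K^{\W\cap U}(q,\xi)$ witnessing the value $\alpha/\rho$, and the bound $F_K^{\W\cap U}(q,\xi)\le F_K^\W(q,\xi)/\rho$ follows by letting $\alpha$ decrease to $F_K^\W(q,\xi)$.

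To produce $\rho$ I would use boundedness of $\W$: fix $R$ with $\W\subset B(0,R)$ and set $d_0=\dist(V,\p U)>0$, so that $B(q,d_0)\subset U$ for every $q\in V$. Since $\phi:\D\to\W$ is bounded in modulus by $R$, a Cauchy estimate yields $|\phi'(\z)|\le C_0 R$ on $|\z|\le 1/2$ for a universal constant $C_0$; integrating along a radial segment from $0$ then gives $|\phi(\z)-q|\le C_0 R\,|\z|$ on the same range. Choosing $\rho=\min\{1/2,\,d_0/(C_0 R)\}$ forces $\phi(\z)\in B(q,d_0)\cap\W\subset U\cap\W$ whenever $|\z|<\rho$, and $C=1/\rho$ is the desired constant.

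The main obstacle is precisely this uniform size estimate on the component of $\phi^{-1}(U)$ around the origin; the Cauchy step requires the ambient boundedness of $\W$ in order to be applied independently of the particular disc $\phi$ and of the basepoint $q\in V$. All domains considered in this paper are bounded, so this raises no genuine difficulty.
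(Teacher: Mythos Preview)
Your argument is correct and follows the same strategy as the paper: restrict an almost-extremal disc $\phi\in\W(\D)$ to a smaller concentric disc of uniform radius so that its image lies in $\W\cap U$, then read off $F_K^{\W\cap U}(q,\xi)\le \rho^{-1}F_K^\W(q,\xi)$. The paper packages this radius abstractly as
\[
r=\inf\{a>0:\exists\,\phi\in\W(\D),\ \phi(0)\in V\cap\W,\ \phi(a)\in\W\setminus U\}
\]
and leaves the positivity of $r$ implicit, whereas you obtain the radius explicitly via a Cauchy estimate using the boundedness of $\W$; in that respect your write-up is slightly more complete, since without boundedness (or some hyperbolicity hypothesis) the quantity $r$ above can indeed be zero and the lemma fails as stated.
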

\begin{proof}
Let $r$ be such that
$$
r=\inf\set{a>0: \exists \phi\in\W(\D),\, \phi(0)=p,\,\phi(a)=p',\, \text{for some }p\in V\cap\W,\,p'\in \W\sm U}.
$$
If $f\in\W(\D)$ satisfies $f(0)=q$ and $f'(0)=\xi/\alpha$, then $g(\z):=f(r\z)\in\W\cap U(\D)$. Hence 
$$
F_K^{\W\cap U}(q,\xi)\le\frac{1}{r} F_K^\W(q,\xi).
$$ 
\end{proof}
\begin{remark}
The localization of the Carath\'eodory metric was proved in \cite{Graham} on a strongly pseudoconvex domain using the existence of the peak function. Theorem \ref{333} does not require the localization of the Carath\'eodory metric. 
\end{remark}

%%%%%%%%%%%%%%%%%%%%%%%%%%%%%%%%%%%%%%
\section{Estimation of the Sibony metric on an Annulus}

Throughout this section, we let $\W=\B\sm\set{(z,w)\in\C^2:\abs z^2+\abs w^m\le1/4}$ ($m\ge 2$), where $\B$ is a unit ball in $\C^2$ with center $0$, $P_\dd=(p,0)=(1/2+\dd,0)$, and $\nu=(1,0)$.

\begin{lemma}\label{1015}
Let $P\in\W$ and $\xi\in\C^2$ be such that the complex line $\phi(\z)=P+\xi\z$ does not touch the inner boundary of $\W$ for all $\z\in\C$. Then $F_K^\W(P,\xi)=F_S^\W(P,\xi)=F_C^\W(P,\xi)=F_C^\B(P,\xi)$. 
\end{lemma}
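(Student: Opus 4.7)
The plan is a squeeze argument showing all four metrics equal $F_C^\B(P,\xi)$. The hypothesis is essentially the geometric statement $L\cap\B\subset\W$, where $L=\{P+\xi\z:\z\in\C\}$: the inner region $\{\abs z^2+\abs w^m\le 1/4\}$ is bounded while $L$ is unbounded, so a complex line avoiding the inner boundary is disjoint from the closed inner set, and hence the entire affine disc $L\cap\B$ lies inside $\W$.

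Apply Corollary~\ref{cor} to $\W\subset\B$ to get $F_C^\W(P,\xi)\ge F_C^\B(P,\xi)$, and to the inclusion $L\cap\B\hookrightarrow\W$ to get $F_K^\W(P,\xi)\le F_K^{L\cap\B}(P,\xi)$. Combined with the general $F_C^\W\le F_S^\W\le F_K^\W$ from (\ref{1054}), these yield
$$
F_C^\B(P,\xi)\le F_C^\W(P,\xi)\le F_S^\W(P,\xi)\le F_K^\W(P,\xi)\le F_K^{L\cap\B}(P,\xi).
$$

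To close the loop I would show $F_K^{L\cap\B}(P,\xi)=F_C^\B(P,\xi)$. Let $\Phi$ be a M\"obius automorphism of $\B$ sending $P$ to $0$ (as in (\ref{157})) and set $\xi'=\Phi_*(P)\xi$. Since affine discs in the ball are precisely its complex geodesics and $\Phi$ permutes complex geodesics, $\Phi(L\cap\B)$ is again an affine disc through $0$, namely $\B\cap\C\xi'$, and this is literally the Euclidean disc of radius $1$ along $\C\xi'$. Its Poincar\'e (= Kobayashi) metric at $0$ in direction $\xi'$ is $\abs{\xi'}$; on the Carath\'eodory side, the linear functional $z\mapsto\ip{z,\xi'}/\abs{\xi'}$ lies in $\D(\B)$, vanishes at $0$, and realizes $\abs{\xi'}$. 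Both $F_K^{L\cap\B}$ and $F_C^\B$ therefore equal $\abs{\xi'}$, and the desired equality at $(P,\xi)$ follows by biholomorphic invariance.

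I foresee no significant obstacle; the only point requiring care is that the M\"obius automorphism of $\B$ sends the affine disc $L\cap\B$ to another affine disc, which is standard since the complex geodesics of the ball are precisely the affine discs.
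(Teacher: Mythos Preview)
Your proposal is correct and follows essentially the same squeeze argument as the paper: both use $\W\subset\B$ and $\Delta:=L\cap\B\subset\W$ together with (\ref{1054}) to reduce to showing $F_K^{\Delta}(P,\xi)\le F_C^\B(P,\xi)$, and both invoke a M\"obius automorphism of $\B$ sending $P$ to $0$. The only cosmetic difference is in that final step: the paper builds a holomorphic retraction $\psi=f^{-1}\circ\pi\circ f:\B\to\Delta$ (with $\pi$ the orthogonal projection onto the linear disc $f(\Delta)$) and applies the non-increasing property, whereas you transport to the origin and compute both sides explicitly as $\abs{\xi'}$; these are two phrasings of the same fact that affine discs are complex geodesics of $\B$.
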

\begin{proof}
By (4) and (6), it is enough to show that $F_K^\W(P,\xi)=F_C^\B(P,\xi)$. 

 Since $\W\subset\B$, we have $F_K^\W(P,\xi)\ge F_K^\B(P,\xi)= F_C^\B(P,\xi)$. Hence it is enough to show $F_K^\W(P,\xi)\le F_C^\B(P,\xi)$. 

Let $\Delta=\W\cap\set{\phi(\z):\z\in\C}$.  We have $F_K^\Delta(P,\xi)\ge F_K^\W(P,\xi)$, since $\Delta\subset\W$. We show that $F_K^\Delta(P,\xi)\le F_C^\B(P,\xi)$ by finding a holomorphic mapping $\psi:\B\To \Delta$ such that $\psi(P)=P$ and $\psi_*(P)\xi=\xi$: Let $\psi=f^{-1}\of\pi\of f$, where $f$ is a M\"obius transformation of $\B$ that sends $P$ to $0$ and $\pi$ is the projection of $\B$ onto $f(\Delta)$.
\end{proof}
\begin{prop} 
\be\label{401}
F_S^\W(P_\dd,\nu)\lesssim\frac{1}{\dd^{1-1/m}}
\ee
\end{prop}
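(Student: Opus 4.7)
The plan is to exploit the subadditivity of $F_S$ from Lemma~\ref{1054}, which $F_K$ does not enjoy, in order to beat the trivial bound $F_S^\W(P_\dd,\nu)\le F_K^\W(P_\dd,\nu)\approx\dd^{-(1-1/(2m))}$ coming from Theorem~\ref{645}. Although the normal direction $\nu$ is maximally obstructed by the pseudoconcave inner sphere, the slightly tilted direction $(1,\epsilon)$ with $\epsilon$ of the right size admits a much longer holomorphic disc, and the pure tangential direction $(0,1)$ admits discs of radius bounded below uniformly in $\dd$.

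Concretely, I would fix $\epsilon=c_0\dd^{-(1-1/m)}$ for a constant $c_0$, decompose $\nu=(1,\epsilon)+(0,-\epsilon)$, and combine Lemma~\ref{1054} with $F_S\le F_K$ and the homogeneity $F_S^\W(P_\dd,(0,-\epsilon))=\epsilon F_S^\W(P_\dd,(0,1))$ to obtain
\[
F_S^\W(P_\dd,\nu)\le F_K^\W(P_\dd,(1,\epsilon))+\epsilon\,F_K^\W(P_\dd,(0,1)).
\]
The disc $\z\mapsto(1/2+\dd,\,c'\z)$ lies in $\W$ for any $c'<(3/4)^{1/m}$, so $F_K^\W(P_\dd,(0,1))=O(1)$ and the second term contributes $O(\dd^{-(1-1/m)})$.

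For the first term, I would use the affine disc
\[
\phi(\z)=\bigl(1/2+\dd+A\dd^{1-1/m}\z,\ c\z\bigr),\qquad\z\in\D,
\]
with $\phi'(0)=(A\dd^{1-1/m},c)$, which forces $c_0=c/A$. The outer condition $|\phi|^2+|\phi_w|^m<1$ reduces to $c<(3/4)^{1/m}$ for small $\dd$. The inner condition is worst at $\z=-r$, and the substitution $r=\dd^{1/m}s$ converts it, to leading order, into
\[
1-As+c^m s^m\ge 0\quad\text{for all }s\ge 0,
\]
an elementary one-variable optimization whose minimum is nonnegative precisely when $c\ge A(m-1)^{(m-1)/m}/m$. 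A nonempty admissible strip of pairs $(A,c)$ exists for every $m\ge 2$; any such pair yields a valid disc and hence $F_K^\W(P_\dd,(1,\epsilon))\le 1/(A\dd^{1-1/m})$, as required.

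Combining the two bounds gives $F_S^\W(P_\dd,\nu)\lesssim\dd^{-(1-1/m)}$. The main conceptual obstacle is identifying the balancing scale $\epsilon\sim\dd^{-(1-1/m)}$ that equalizes the two terms in the subadditive decomposition; once this is guessed, the rescaling $r=\dd^{1/m}s$ removes $\dd$ from the inner-boundary optimization and the rest is routine bookkeeping.
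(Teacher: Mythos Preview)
Your argument is correct and follows the same overall strategy as the paper: exploit the subadditivity of $F_S$ (Lemma~\ref{1054}), split $\nu$ into directions tilted away from the normal, and show that once the tilt exceeds a threshold of order $\dd^{-(1-1/m)}$ the obstruction from the inner boundary disappears. The one--variable minimization you carry out after the rescaling $r=\dd^{1/m}s$ is exactly the computation the paper performs to bound its threshold $\beta$.

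The execution differs in two cosmetic respects. First, the paper uses the symmetric splitting $\nu=(1/2,v)+(1/2,-v)$ rather than your asymmetric $(1,\epsilon)+(0,-\epsilon)$; this avoids having to treat a separate tangential piece. Second, and more substantively, instead of bounding each tilted piece by $F_S\le F_K$ and building an explicit affine disc, the paper invokes Lemma~\ref{1015}: once the full complex line in direction $(1,v)$ misses the inner boundary, one has $F_S^\W(P_\dd,(1,v))=F_C^{\B}(P_\dd,(1,v))$ exactly, and the latter is computed by the M\"obius formula~(\ref{157}). This yields a closed-form bound $\frac{2}{1-p^2}\bigl[\tfrac14+(1-p^2)\beta^2\bigr]^{1/2}$ before any asymptotics, whereas your route gives only the leading order. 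Conversely, your approach is slightly more self-contained in that it does not need Lemma~\ref{1015}; the affine disc you write down already certifies the Kobayashi bound directly.
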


\begin{proof}

Let $\beta>0$ be such that the complex line $\phi(\z)=P_\dd+\z(1,v)$, $\z\in\C$, does not touch the inner boundary if $\abs v>\beta$. We may write $\nu=(1,0)=(1/2,v)+(1/2,-v)$ for some $v\in\C$ such that $\abs v>\beta$. By Lemma \ref{1054}, Lemma \ref{1015}, and (\ref{157}), we get
\begin{align*}
F_S^\W(P_\dd,\nu)&\le F_S^\W(P_\dd, (1/2,v))+F_S^\W(P_\dd, (1/2,-v))\\
&=F_C^\B(P_\dd,(1/2,v))+F_C^\B(P_\dd,(1/2,-v))\\
&=\frac{2}{1-p^2}\bk{\frac{1}{4}+(1-p^2)\abs v^2}^{1/2}
\end{align*}

Since the above inequality holds for all $v\in\C$ such that $\abs v>\beta$, we have
\be\label{400}
F_S^\W(P_\dd,\nu)\le \frac{2}{1-p^2}\bk{\frac{1}{4}+(1-p^2)\beta^2}^{1/2}.
\ee

Now we estimate $\beta$: Since $\phi(\z)$ does not touch the inner boundary if $\abs v>\beta$, we have
\be\label{339}
\frac{1}{4}<\norm{\phi(\z)}^2=\Abs{\frac{1}{2}+\dd+\z}^2+\abs{\z v}^m,\quad\forall \z\in\C \text{ if }\abs v>\beta.
\ee
Since we have a lower bound on the right hand side of (\ref{339}) as follows:
$$
\frac{1}{4}+\re(\dd+\z)+\abs{\dd+\z}^2+\abs \z^m\abs v^m\ge
\frac{1}{4}+\dd-\abs\z+\abs v^m\abs\z^m,
$$

\noindent we can estimate $\beta$ from above by finding the condition on $\abs v$ such that $f(x)=1/4+\dd-x+\abs v^m x^m>1/4$ for all $x\ge 0$. Since $f$ has only one critical point on the positive $x$-axis, which is $(\abs v^m m)^{-1/(m-1)}$, it is equivalent to finding the condition on $\abs v$ such that
$$
f((\abs v^m m)^{-1/(m-1)})=\frac{1}{4}+\dd- (\abs v^m m)^{-1/(m-1)}+\abs v^m (\abs v^m m)^{-m/(m-1)}\ge \frac{1}{4}.
$$

Hence we get
$$
\dd\ge \abs v^{-m/(m-1)}\br{ m^{-1/(m-1)}-m^{-m/(m-1)}}
$$
and therefore
$$
\abs v\ge C\frac{1}{\dd^{1-1/m}},
$$ 
where $C$ is a constant depending on $m$. Therefore 
\be\label{359}
\beta\le C\frac{1}{\dd^{1-1/m}}
\ee
and (\ref{359}) together with (\ref{400}) proves (\ref{401}). 
\end{proof}
\begin{remark}
For a general tangent vector $\nu=(a,b)=a(1,0)+b(0,1)= N+T$ we get the inequality
$F_S^\W(P_\dd,\nu)\lesssim\frac{1}{\dd^{1-1/m}}|N|+ |T|$
\end{remark}
\begin{remark}
If $\W=\set{1/4<\abs {z_1}^2+\abs{z_2}^{m_2}+\cdots+\abs{z_n}^{m_n}<1}\subset\C^n$, $2\le m_2\le m_3\le\cdots\le m_n$, $P_\dd=(1/2+\dd,0,\dots,0)$ and $\nu=(1,0,\dots,0)$, then
$$
F_S^\W(P_\dd,\nu)\lesssim \frac{1}{\dd^{1-\frac{1}{m_2}}}.
$$
It is because the metric on $\W$ is less than the metric on the slice of $\W$: $\W'=\set{1/4<\abs{z_1}^2+\abs{z_2}^{m_2}<1}\cap\set{z_j=0, \, j\ge 3}$.
\end{remark}
%%%%%%%%%%%%%%%%%%%%%%%%%%%%%%
\begin{prop}\label{400}
$$
F_S^\W(P_\dd,\nu)\gtrsim\frac{1}{\dd^{1-1/m}}
$$
\end{prop}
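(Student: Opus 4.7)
My plan is to exhibit an explicit $u\in A_\W(P_\dd)$ whose Levi form applied to $(\nu,\cl\nu)$ at $P_\dd$ is at least of order $\dd^{-2(1-1/m)}$; taking square roots then yields the claim. I would seek $u$ in the form
\[
u(z,w)=|z-p|^2\,e^{v(z,w)},
\]
where $v$ is plurisubharmonic on $\W$ and of class $C^2$ near $P_\dd$. Then $\log u=2\log|z-p|+v$ is plurisubharmonic as a sum, $u(P_\dd)=0$, and because $|z-p|^2$ together with its first derivatives vanishes at $P_\dd$, a direct computation gives
\[
\p\cl\p u(P_\dd)(\nu,\cl\nu)=e^{v(P_\dd)}.
\]
The admissibility condition $u\le 1$ on $\W$ translates to the pointwise inequality $v(z,w)\le -2\log|z-p|$ on $\W$. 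Thus the problem reduces to producing a plurisubharmonic $v$ with this upper bound and with $v(P_\dd)\ge 2(1-\tfrac{1}{m})\log(1/\dd)-O(1)$.

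The key geometric input is the order-$m$ pseudoconcavity of the inner boundary at the nearest point $(1/2,0)$: a direct calculation shows that on the inner boundary one has $|z-p|^2\approx \dd^2+2\dd|w|^m$, so the upper bound $-2\log|z-p|$ is close to its maximum $-2\log\dd$ only in a wedge of size $|w|\lesssim \dd^{1/m}$ around $(1/2,0)$ and decays like $-m\log|w|$ outside. The restricted size of this ``peak region'' is exactly what prevents $v(P_\dd)$ from saturating the naive bound $-2\log\dd$ and instead yields the relaxed value $2(1-\tfrac{1}{m})\log(1/\dd)$. To realize this quantitatively, I would use the family of tangent discs $\phi_t(\zeta)=P_\dd+\zeta(1,t)$ with $|t|=\beta\asymp \dd^{-(1-1/m)}$ from Proposition~\ref{401}: each lies in $\W$ for $|\zeta|\lesssim \dd$ and is tangent to the inner pseudoconcave boundary. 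Constructing $v$ as the upper-semicontinuous regularization of a supremum of plurisubharmonic functions supported on these discs and calibrated so that $v\le -2\log|z-p|$ globally on $\W$ should yield $v(P_\dd)$ of the required order.

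The main obstacle lies in this construction: one must verify both that the envelope is plurisubharmonic on all of $\W$ (not merely on the individual discs) and that the global bound $v\le -2\log|z-p|$ holds everywhere, including away from the tangent disc family. A cleaner alternative would be to invoke the localization of the Sibony metric to be proved in Section~4: localizing reduces the estimate to the bounded model $\{\re z'>-|w'|^m\}\cap B_\eta(0)$ with $P_\dd$ at $(-\dd,0)$, and the scaling symmetry $(z',w')\mapsto (\lambda z',\lambda^{1/m}w')$ preserves this model, so the $\dd^{-(1-1/m)}$ law would follow from a single unit-scale estimate by pure homogeneity. Either route, once the required plurisubharmonic $v$ has been produced, the Hessian identity $\p\cl\p u(P_\dd)(\nu,\cl\nu)=e^{v(P_\dd)}\gtrsim \dd^{-2(1-1/m)}$ is immediate.
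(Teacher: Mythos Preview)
Your reduction is correct: with $u=|z-p|^{2}e^{v}$, $v$ plurisubharmonic and $C^{2}$ near $P_{\dd}$, one indeed has $\log u$ plurisubharmonic, $u(P_{\dd})=0$, $0\le u\le 1\iff v\le-2\log|z-p|$, and $\p\cl\p u(P_{\dd})(\nu,\cl\nu)=e^{v(P_{\dd})}$. But the entire substance of the proposition is to \emph{produce} such a $v$ with $v(P_{\dd})\ge -2(1-\tfrac1m)\log\dd-O(1)$, and neither of your two routes does this. The envelope over tangent discs is only a heuristic: you would need a family of plurisubharmonic functions defined on all of $\W$, not merely on one-dimensional slices, and you offer no candidate. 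The scaling route contains an actual error. The dilation $(z',w')\mapsto(\lambda z',\lambda^{1/m}w')$ preserves the unbounded model $M=\{\re z'>-|w'|^{m}\}$, but it carries $(\dd,0)$ to $(\lambda\dd,0)$ and pushes $\nu=(1,0)$ forward to $(\lambda,0)$; invariance therefore gives
\[
F_{S}^{M}\bigl((\dd,0),\nu\bigr)=\dd^{-1}\,F_{S}^{M}\bigl((1,0),\nu\bigr),
\]
a $1/\dd$ law, not $\dd^{-(1-1/m)}$. The exponent $1-\tfrac1m$ arises precisely because the outer boundary (equivalently the ball $B_{\eta}$ in your localized model) destroys this symmetry; once you intersect with $B_{\eta}$ the dilation no longer preserves the domain, and the ``single unit-scale estimate'' is as hard as the original problem.

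The paper proceeds by an explicit construction. One sets
\[
f(z)=\dd^{2/m}\Abs{\frac{z-p}{z-\tfrac12+\dd}}^{2},
\]
placing the pole at $\tfrac12-\dd$, just inside the deleted set and hence outside $\W$. An elementary estimate shows $f\le C\,\dd^{2/m}$ throughout $\W\cap\{|w|<c\,\dd^{1/m}\}$, which lets one glue $\log\bigl(f(z)+|w|^{2}\bigr)$ with $\log\bigl(L|w|^{2+\e}\bigr)$ by taking the maximum: the second branch dominates near $|w|\sim\dd^{1/m}$ and yields a global plurisubharmonic extension, while the first branch wins near $P_{\dd}$. Subtracting a constant $L'$ one obtains $e^{u}\in A_{\W}(P_{\dd})$ with $e^{u}=e^{-L'}\bigl(f(z)+|w|^{2}\bigr)$ near $P_{\dd}$, whence $\p\cl\p e^{u}(P_{\dd})(\nu,\cl\nu)=e^{-L'}/(4\dd^{2-2/m})$. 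In your language this amounts to taking $v(z,0)\approx(2/m)\log\dd-2\log|z-\tfrac12+\dd|$, so $e^{v(P_{\dd})}\asymp\dd^{-2(1-1/m)}$; but note that the global $e^{u}$ is \emph{not} of the form $|z-p|^{2}e^{v}$---the additive $|w|^{2}$ and the max-patching are exactly what make plurisubharmonicity and the bound $e^{u}\le1$ verifiable by hand.
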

\begin{proof}
To construct a function $u(z,w)$ giving a lower bound on the Sibony metric, we first find a function
which is a candidate for small $|w|$ and then patch with a function of $|w|$ to globalize. For small $|w|$
we take advantage of the fact that $|z|>1/2-\delta/2$ to get a function with large derivative in the $z$ direction. We give next the precise construction:\\
Let
$$
f(z)=\dd^{\frac{2}{m}}\Abs{\frac{z-p}{z-p+2\dd}}^2=\dd^{\frac{2}{m}}\Abs{\frac{z-1/2-\dd}{z-1/2+\dd}}^2
$$
and we define a plurisubharmonic function $u(z,w)$ on $\W$ as follows:
$$
u(z,w)=
\bcases
\max\set{\log\br{f(z)+\abs w^2}, \log\br{L\abs w^{2+\e}}}-L',& \abs w<c^{2/m}\dd^{1/m}\\
\log\br{L\abs w^{2+\e}}-L', & \abs w\ge c^{2/m}\dd^{1/m},
\ecases
$$
for some small constant $c\in (0,1/2)$, $0<\e<<1$, and large constants $L, L'$, which will be defined later. 

We shall show that $u(z,w)<0$ on $\W$,  that $\exp u(z,0)=e^{-L'}f(z)$ near $P_\dd$ and that $u$ is plurisubharmonic. 

Let $\W':=\W\cap\set{\abs w<c^{2/m}\dd^{1/m}}$. We shall show that $f(z)\le C\dd^{2/m}$ for some constant $C$ on $\W'$. Since $\W'\subset \set{ 1/4-c^2\dd<\abs z^2<1,\, \abs w<c^{2/m}\dd^{1/m}}$, it is enough to show that $f(z)\le C\dd$ for all $z$ such that $1/4-c^2\dd<\abs z^2<1$.

Let $z=x+iy$. Then we have
$$
f(z)=\dd^{\frac{2}{m}}\frac{(x-1/2-\dd)^2+y^2}{(x-1/2+\dd)^2+y^2}
$$
Hence $f(z)\leq\dd^{\frac{2}{m}}$ if $x-1/2\ge 0$. 

If $x-1/2\le 0$, then $(x-1/2+\dd)^2+y^2\le (x-1/2-\dd)^2+y^2$. Since $A/B\ge (A+C)/(B+C)$, if $A\ge B>0$ and $C\ge 0$, we have
$$
f(z)\le\dd^{2/m}\frac{(x-1/2-\dd)^2}{(x-1/2+\dd)^2}, \quad \text{if }x\in \bk{-1,-\sqrt{\frac{1}{4}-c^2\dd}}\cup\bk{\sqrt{\frac{1}{4}-c^2\dd},\frac{1}{2}}
$$
and
$$
f(z)\le\dd^{2/m}\frac{(x-1/2-\dd)^2+1/4-c^2\dd-x^2}{(x-1/2+\dd)^2+1/4-c^2\dd-x^2},\quad\text{if }x\in\bk{-\sqrt{\frac{1}{4}-c^2\dd},\sqrt{\frac{1}{4}-c^2\dd}}.
$$
A simple calculation shows that 
\be\label{1241}
f(z)\le \dd^{\frac{2}{m}}\frac{(1/2+\dd-\sqrt{1/4-c^2\dd})^2}{(1/2-\dd-\sqrt{1/4-c^2\dd})^2}
\le\dd^{\frac{2}{m}}\frac{(1+3c^2)^2}{(1-3c^2)^2}\le \dd^{\frac{2}{m}}(1+3c^2)^2(1+5c^2)^2,
\ee
for $c\le 1/3$, since
\be\label{1242}
\frac{1}{2}-3c^2\dd<\sqrt{\frac{1}{4}-c^2\dd}<\frac{1}{2}-c^2\dd.
\ee

Therefore 
$f(z)\le 5\dd^{2/m}$ for all $(z,w)\in\W\cap\set{\abs w<(\dd/3^2)^{1/m}}.$

Hence if we let $L=200$ and $c=1/3$, then
$$
f(z)+\abs w^2\le 5\dd^{2/m}+\dd^{2/m}/9\le 6\dd^{\frac{2}{m}} ,\quad \br{\frac{\dd}{4^2}}^{1/m}<\abs w<\br{\frac{\dd}{3^2}}^{1/m}
$$
and 
$$
6\dd^{\frac{2}{m}}<100\frac{\dd^{\frac{2}{m}+\frac{\e}{m}}}{16} <200\abs w^{2+\e},\quad\abs w>\br{\frac{\dd}{4^2}}^{\frac{1}{m}},
$$
for $\e<\frac{m(\log(0.96))}{\log\dd}$. 
Therefore
$$
\max\set{\log(f(z)+\abs w^2), \log(200\abs w^{2+\e})}=\log (200\abs w^{2+\e}),\quad \br{\frac{\dd}{4^2}}^{1/m}<\abs w<\br{\frac{\dd}{3^2}}^{1/m}
$$
Hence the function $u(z,w)$ is a well-defined plurisubharmonic function. Since $\abs w<1$ on $\W$, we may choose $L'=\log(200)$. 

Now we look at a small neighborhood of $P_\dd$. 
If $|w|<\left(\frac{1}{200}\right)^{\frac{1}{\e}}$, then $200|w|^{2+\e}<|w|^2$ so
$\log 200|w|^{2+\e}<\log (f(z)+|w|^2).$ Hence $e^u$ is smooth in a neighborhood of $P_\delta$, $e^{u(z,w)}=e^{-L'}f(z).$ 
%Let $U=\set{\abs {z-p}<\dd^2,\;\abs w<\dd^{-\log\dd}}$.
% If $\abs w=\dd^{-\log\dd}$, then
%$$
%f(z)+\abs w^2=f(z)+ \dd^{-2\log\dd},
%$$
%and
%$$
%100\abs w^{2+\e}<100\dd^{-2\log\dd-m\log(0.9)}<\frac{1}{2}\dd^{-2\log\dd},
%$$
%for $\dd$ small enough. Therefore
%$$
%\max\set{\log(f(z)+\abs w^2),\log(200\abs w^{2+\e})}=\log(f(z)+\abs w^2)
%$$
%in a small neighborhood of $P_\dd$. Hence we have
$$
\br{\p\cl\p e^u(P_\dd)(\nu,\cl \nu)}=\frac{e^{-L'}}{4\dd^{2-\frac{2}{m}}}
$$
Hence the lower bound for the Sibony metric follows.
\end{proof}
\begin{remark}
For a general tangent vector $\nu=(a,b)=a(1,0)+b(0,1)= N+T$ we get the inequality
$F_S^\W(P_\dd,\nu)\gtrsim\frac{1}{\dd^{1-1/m}}|N|+ |T|$
\end{remark}
\begin{remark}
If $\W=\set{1/4<\abs {z_1}^2+\abs{z_2}^{m_2}+\cdots+\abs{z_n}^{m_n}<1}\subset\C^n$, $2\le m_2\le m_3\le\cdots\le m_n$, $P_\dd=(1/2+\dd,0,\dots,0)$ and $\nu=(1,0,\dots,0)$, then
\be\label{349}
F_S^\W(P_\dd,\nu)\gtrsim \frac{1}{\dd^{1-\frac{1}{m_2}}}.
\ee
Since $\abs{z_1}^2+\abs{z_2}^{m_2}+\cdots+\abs{z_n}^{m_n}\le \abs{z_1}^2+\abs{z_2}^{m_2}+\cdots+\abs{z_n}^{m_2}\le \abs{z_1}^2+\abs{z'}^{m_2}$, $z'=(z_2,\dots, z_n)$, we have $\W\subset\set{1/4<\abs{z_1}^2+\abs{z'}^{m_2}}\cap\set{\abs{z_1}^2+\abs{z_2}^{m_2}+\cdots+\abs{z_n}^{m_n}<1}$. The argument of Proposition \ref{400} goes through with $\abs{z'}$ in place of $\abs w$. 
\end{remark}
%%%%%%%%%%%%%%%%%%%%%%%%%%%%%%%%%%%%%%%%%%%%%%%%%%%%%%%%%%%%%%%%%%%
\section{Comparison of the metrics on a general domain}

We first prove a localization Lemma for the Sibony metric. 
\begin{lemma}\label{426}
Let $\W\subset\C^n$ be a bounded domain. If $V\subset\subset U$ are open sets, then we have
$$
F_S^{U\cap\W}(q,\xi)\approx F_S^\W(q,\xi),\quad\forall q\in V\cap\W, \,\forall\xi\in\C^n
$$
\end{lemma}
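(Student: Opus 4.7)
The inequality $F_S^{U\cap\W}(q,\xi)\ge F_S^\W(q,\xi)$ follows from Corollary~\ref{cor} applied to the inclusion $U\cap\W\subset\W$. The content of the lemma is therefore the reverse comparison: given any $u\in A(q,U\cap\W)$, my plan is to produce $\tilde u\in A(q,\W)$ whose complex Hessian at $q$ in the direction $\xi$ is bounded below by a fixed multiple of that of $u$, with a constant independent of $q\in V\cap\W$ and $\xi\in\C^n$.

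Set $D=\mathrm{diam}(\W)$, fix an intermediate open set $V\subset\subset V'\subset\subset U$, put $d_0=\mathrm{dist}(V,\partial V')>0$, and arrange (by taking $V'$ sufficiently close to $V$) that the ball $B\bigl(q,d_0^2/(\sqrt 2\,D)\bigr)$ is contained in $U$ for every $q\in V$. Introduce the global barrier
\[
v(z)=\frac{\|z-q\|^4}{D^4},
\]
which lies in $A(q,\W)$ since it is smooth, nonnegative, bounded by $1$, vanishes at $q$, and has $\log v=2\log\|z-q\|^2-4\log D$ plurisubharmonic. Next, inflate $u$ to
\[
u''(z)=\tfrac12\bigl(u(z)+\|z-q\|^2/D^2\bigr),\qquad z\in U\cap\W,
\]
which lies in $A(q,U\cap\W)$ because sums of log-plurisubharmonic functions are log-plurisubharmonic; the key feature is the pointwise lower bound $u''(z)\ge\|z-q\|^2/(2D^2)$, valid because $u\ge 0$. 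With $c=d_0^4/D^4$ define
\[
\tilde u(z)=\begin{cases}\max\bigl(c\,u''(z),\,v(z)\bigr), & z\in U\cap\W,\\ v(z), & z\in\W\setminus U.\end{cases}
\]
On the shell $(U\setminus V')\cap\W$ we have $\|z-q\|\ge d_0$, so $v(z)\ge d_0^4/D^4=c\ge cu''(z)$; the two branches agree there, hence $\tilde u$ is well-defined and log-plurisubharmonic on $\W$ as a maximum of two log-plurisubharmonic functions.

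The critical point is the $C^2$-regularity of $\tilde u$ at $q$. From the inflation bound one obtains
\[
c\,u''(z)\ge \frac{d_0^4}{2D^6}\,\|z-q\|^2,
\]
which strictly exceeds $v(z)=\|z-q\|^4/D^4$ whenever $0<\|z-q\|<d_0^2/(\sqrt 2\,D)$. By construction this ball lies inside $U$, so $\tilde u\equiv cu''$ there; hence $\tilde u$ is $C^2$ near $q$ and $\tilde u\in A(q,\W)$. Computing the complex Hessian,
\[
\partial\bar\partial\tilde u(q)(\xi,\bar\xi)=c\,\partial\bar\partial u''(q)(\xi,\bar\xi)=\tfrac{c}{2}\Bigl(\partial\bar\partial u(q)(\xi,\bar\xi)+\|\xi\|^2/D^2\Bigr)\ge\tfrac{c}{2}\,\partial\bar\partial u(q)(\xi,\bar\xi),
\]
and taking square roots and the supremum over $u$ yields $F_S^\W(q,\xi)\ge\sqrt{c/2}\,F_S^{U\cap\W}(q,\xi)$ with a constant depending only on $U$, $V$, and $\W$.

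The main obstacle is precisely this $C^2$-regularity at $q$: a naive maximum against a quadratic barrier $\|z-q\|^2/D^2$ would demand that the complex Hessian of $u$ dominate that of the barrier in \emph{every} direction, which fails whenever $u$'s Hessian has small eigenvalues. Replacing the barrier by a fourth-order-vanishing function together with the preliminary inflation step $u\mapsto u''$ forces $cu''>v$ in a full punctured neighborhood of $q$ regardless of $u$, and this is what allows the glued function to be admissible and smooth on all of $\W$.
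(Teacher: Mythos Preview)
Your argument is correct and follows essentially the same route as the paper: inflate $u$ by a quadratic term $\e\|z-q\|^2$ to force a lower bound $\gtrsim\|z-q\|^2$, compare against a quartic barrier $\|z-q\|^4$ whose logarithm is still plurisubharmonic, and take the maximum to glue the local competitor into a global element of $A(q,\W)$. The only differences are cosmetic (you work with the functions rather than their logarithms and introduce an intermediate set $V'$ for the gluing region), and your write-up is in fact more complete than the paper's, which stops short of the final Hessian comparison.
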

\begin{proof}
Since $U\cap\W\subset\W$, by (\ref{443}), we have $F_S^{U\cap\W}(z,\xi)\ge F_S^\W(z,\xi)$. 

Now we show the other direction. Let 
$$
r:=\dist(V\cap \W, \W\setminus U).
$$
For $q\in V\cap \W$, if $u\in A(q,U\cap\W)$, then define 
$$
v:=\bcases
\max\set{ \log \br{u+\e\abs{z-q}^2}, \log\Frac{2\abs{z-q}^4}{r^4}}-L,& z\in B(q,r)\cap\W\\
\log\Frac{2\abs{z-q}^4}{r^4}-L,\quad z\in\W\sm B(q,r)
\ecases
$$
where $\e>0$ is a very small constant such that $\e\abs{z-q}^2\le 1/2$ for all $z\in\W$, and $L$ is a large constant that will be chosen later. 

First we show that $v=\log(u+\e\abs{z-q}^2)-L$ near $q$: Let $\abs{z-q}=\dd$. Since $u
\ge 0$, we have $\log(u+\e\abs{z-q}^2)\ge \log\e+2\log\dd$. We also have
$$
\log \frac{2\abs{z-q}^4}{r^4}=\log 2+4\log\dd -4\log r
$$
Hence for $\dd$ small enough, $v=\log(u+\e\abs{z-q}^2)-L$ for all $z$ such that $\abs{z-q}\le\dd$. 

Now we choose $L$ such that $v\le 0$ on $\W$: Since $u+\e\abs{z-q}^2\le 3/2$ for all $z\in U\cap\W$, we have
$$
v=\log\frac{2\abs{z-q}^4}{r^4}-L, \quad z\in U\cap\W\cap\set{\abs{z-q}^4>3r^4/4}.
$$
Hence $v$ is a well defined plurisubharmonic function and for a large constant $L$, we have $v\le 0$ on $\W$.

\end{proof}
%\begin{lemma}
%Let $0 \in U\subset \mathbb B(0,1)\subset \mathbb C^n$. Let $U_r:= U \cap \mathbb B(0,r), r<1.$
%Then $$F_S^U(P,\nu)\leq F_S^{U_r}(P,\nu)\leq  \left(\frac{1}{r}\right)^5 F_S^{U}(P,\nu)$$
%\end{lemma}

%\begin{proof}
%Let $u_r$ be a candidate function for the metric on $U_r.$ Suppose that $u$ is strongly plurisubharmonic in a neighborhood of $0$ [We need to rule out the case $u_r=\log |z_1|^2$]
%Then we note that $u_r-\log (1/r)$ is also a candidate and so is 
%$$
%v_r:=\max\{u_r-\log(1/r), \log \frac{\|z\|^4}{r^4}\}
%$$

%We note that this function can be extended to $U$ by setting it equal to
%$\log \frac{\|z\|^4}{r^4}$ outside $U_r.$ To make the function negative, we subtract
%$4\log (1/r).$ The result follows.
%\end{proof}

\noindent{\bf Proof of Theorem \ref{333}}

\begin{proof}
Since $\W$ has a $C^2$-boundary, we can find a small ball of radius $r$, $B_r$, that lies outside $\W$ and that is tangent to $\p\W$ at $P$ and a small neighborhood $U$ of $P$ such that $\W\cap U\subset U\sm B_r$. Therefore the lower bound for the Kobayashi metric and the Sibony metric follows from Corollary \ref{cor}, Theorem \ref{332}, Theorem \ref{645}, Lemma \ref{427}, and Lemma \ref{426}. The lower bound for the Carath\'eodory metric is trivial. 

To show the upper bound of the metrics, we look at the slice of the domain. Let $\re z_1$ be the real normal direction of $\p\W$ at $P$ and $z_2$ be the pseudoconcave direction of $\p\W$ at $P$. Letting $P=0$, we may assume
$$
\set{\re z_1-\abs{z_2}^2 +C\abs{z_1}^2<0}\cap\set{z'=0}\cap U\subset \W\cap U,\quad z'=(z_3,\dots, z_n),
$$
for a sufficiently small neighborhood $U$ of $P$. 
The upper bounds for the Kobayashi and Sibony metrics follow by Corollary \ref{cor}, Theorem \ref{645}, Theorem \ref{332}, Lemma \ref{427}, and Lemma \ref{426}. 

%Let 
%$$
%\phi(\z)=P_\dd+\br{c\dd^{3/4}\z,c'\z^2,0,\dots,0}
%$$
%We can find $c$ and $c'$ such that $\phi(\z)\in\W_1$ for all $\z\in\D$. Hence $F_K^{\W_1}\lesssim\frac{1}{\dd^{3/4}}$.

By Hartogs' extension phenomenon, any holomorphic function on $\W$ can be extended to its holomorphic convex hull, hence to a fixed neighborhood
$\W\cup \B(0,r)$
% namely, $\W_1'=\W_1\cup\set{\re\z_1\le\sup_{z\in\W_1} \re z_1}$. 

%Hence we can find $r>0$ such that $\W_2:=\set{\abs {z_1}^2+\abs{z_2}^2<r^2}\cap\set{z'=0}\subset\W_1'$. 
Therefore $F_C^{\W}(P_\dd,\nu)\le F_C^{\W\cup\B(0,r)}(P_\dd,\nu)\approx 1$. 

%Using the same argument as in Lemma \ref{1015}, we have $F_S^{\W_1\cap\W_2}(P_\dd,\nu)=F_C^{\W_2}(P_\dd,\nu)$ if $\nu$ is in the direction that does not touch the boundary of $\W_1$. We can let $\nu=(c/\sqrt\dd)\nu+c'\p/\p z_2$. Hence the claim follows. 

\end{proof}

\bigskip

\noindent John Erik Forn\ae ss\\
Mathematics Department\\
The University of Michigan\\
East Hall, Ann Arbor, MI 48109\\
USA\\
fornaess@umich.edu\\

\noindent Lina Lee\\
Mathematics Department\\
The University of Michigan\\
East Hall, Ann Arbor, MI 48109\\
USA\\
linalee@umich.edu\\
\end{document}